\theoremstyle{plain}
\newtheorem{thm}{Theorem}
\newtheorem{lem}[thm]{Lemma}
\theoremstyle{definition}
\begin{document}

\title{The word problem for free groups cannot be solved in linear time*}
\author{Alessandro Sisto}
	\address{Maxwell Institute and Department of Mathematics, Heriot-Watt University, Edinburgh, UK}
	\email{a.sisto@hw.ac.uk}

 \begin{abstract}
  *by a standard (one-tape) Turing machine. It is well-known that the word problem for hyperbolic groups, whence in particular for free groups, can be solved in linear time. However, these algorithms run on machines more complicated than a standard Turing machine. By contrast, in this note we show that a standard Turing machine cannot solve the word problem for the free group on two generators in less than quadratic time.
 \end{abstract}
 
 \maketitle
 
 The word problem for hyperbolic groups can be solved in linear time (see, e.g. \cite[Theorem 2.28]{Alonso}), but this requires using two tapes (or more for a real-time solution \cite{Holt:real-time}). Theorem \ref{thm:quadratic} below shows that with one tape one cannot do better than quadratic time, even for a free group on two generators. All Turing machines below are one-tape.

 We will consider the free group $F_2$ on two generators $a,b$. When referring to words, we will always mean words in the alphabet $\mathcal A=\{a,b,\bar a,\bar b\}$ (where $\bar a,\bar b$ represent the inverses of $a,b$). Recall that a word in $\mathcal A$ is called trivial if it represents the trivial element of $F_2$.
 
  In the theorem below, we regard words in the alphabet $\mathcal A$ as inputs of Turing machines where we index the cells of the tapes by integers. Specifically, we assume that the word is written starting at cell $1$, which we also assume to be the starting position of the head.
 
 \begin{thm}
 \label{thm:quadratic}
  There does not exist a Turing machine with tape alphabet symbols $\mathcal A$ which accepts a word in $\mathcal A$ if and only if it is trivial, and which halts in $o(n^2)$ time when the input word has length $n$.
 \end{thm}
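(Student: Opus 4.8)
The plan is to use a crossing-sequence (cut-and-paste) argument, the standard tool for quadratic lower bounds on one-tape Turing machines. Fix a hypothetical machine $M$ as in the statement, with state set $Q$, running in time $T(n)$, and suppose for contradiction that $T(n)=o(n^2)$. For a boundary between two adjacent tape cells, recall the \emph{crossing sequence} at that boundary during a computation: the ordered list of states in which the head crosses it. The fact I would isolate first is the cut-and-paste lemma: if $M$ halts and accepts on two inputs $x=x_Lx_R$ and $y=y_Ly_R$ split at the same boundary position (so $|x_L|=|y_L|$), and if the crossing sequences of the two computations at that boundary coincide, then $M$ also halts and accepts on the spliced inputs $x_Ly_R$ and $y_Lx_R$. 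The contradiction will come from producing a spliced input that is accepted but represents a nontrivial element.

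For $u$ a word in $\{a,b\}^m$ (positive letters only, hence automatically reduced), write $\bar u$ for its formal inverse (reverse $u$ and bar each letter), and set $W_u=u\,P\,\bar u$, where $P=a^m\bar a^m$ is a fixed trivial padding word of length $2m$. Then $W_u$ has length $n=4m$ and represents $u\cdot 1\cdot u^{-1}=1$, so every $W_u$ is trivial and is accepted by $M$. The role of $P$ is that cutting $W_u$ at \emph{any} boundary $\beta$ with $m\le\beta\le 3m$ (anywhere within or at the ends of the $P$-block) splits it as $u\cdot P_1$ on the left and $P_2\cdot\bar u$ on the right with $P_1P_2=P$ as strings. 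Consequently, splicing $W_u$ and $W_{u'}$ at such a boundary yields exactly $u\,P\,\bar u'$, which represents $u(u')^{-1}$ and is therefore trivial if and only if $u=u'$. Thus if two distinct $u\neq u'$ in $\{a,b\}^m$ produced equal crossing sequences at a common such boundary, the cut-and-paste lemma would force $M$ to accept the nontrivial word $u\,P\,\bar u'$, contradicting correctness of $M$.

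It remains to show that such a collision is unavoidable when $T=o(n^2)$. There are $N=2^m$ words $u$, all giving pairwise distinct group elements, and for each the $2m+1$ boundaries with $m\le\beta\le 3m$ are all of the good type above. Since the total length of all crossing sequences of a single computation is bounded by its number of steps, hence by $T$, for each $u$ at least one good boundary $\beta(u)$ carries a crossing sequence $\sigma(u)$ of length at most $\ell:=T/(2m+1)\le T/m$. Encoding each $u$ by the pair $(\beta(u),\sigma(u))$ uses at most $(2m+1)\cdot|Q|^{\ell+1}$ values. Taking logarithms, $\log N=\Theta(m)$ while $\log\big((2m+1)|Q|^{\ell+1}\big)=O(\log m)+O(T/m)=o(m)$, because $T=o(m^2)$; hence $N$ exceeds the number of available encodings for all large $m$. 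By pigeonhole two distinct $u\neq u'$ share a pair $(\beta,\sigma)$, producing exactly the forbidden collision and the desired contradiction.

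The main obstacle, and the step I would write out most carefully, is the cut-and-paste lemma itself: one must check that the hybrid computation obtained by running $M$ on $x_Ly_R$ is well defined and halting, by interleaving the left part of the computation on $x$ with the right part of the computation on $y$ along the matched crossing sequence, and that acceptance is preserved. The remaining ingredients—the triviality bookkeeping for $W_u$ and $u\,P\,\bar u'$, the averaging over the central band of boundaries, and the counting of short crossing sequences—are routine once the lemma is in place.
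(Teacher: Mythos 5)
Your proposal is correct and follows essentially the same route as the paper: a crossing-sequence (cut-and-paste) argument at a boundary in the central portion of trivial words of the form $u\,(\text{padding})\,\bar u$, with a pigeonhole count of short crossing sequences forcing two distinct $u\neq u'$ to collide, whose splice $u\,P\,\bar u'$ is a nontrivial accepted word. The paper's sets $\mathcal W_n$ and ``checkpoint sequences'' are the same objects in different notation, so the two arguments differ only in bookkeeping.
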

 
 \begin{proof}
 We will need large sets of trivial words with suitable additional properties whose use will be clear later.
 
 \begin{lem}
 \label{lem:words}
  There exists $\alpha>1,n_0\in\mathbb N$ and sets of words $\mathcal W_n$ with the following properties for all even $n$.
  
  \begin{enumerate}
   \item Each $w\in \mathcal W_n$ is trivial and has length $n$.
   \item $|\mathcal W_n|\geq \alpha^n$ for all $n\geq n_0$.
   \item\label{item:initial_words} For all integers $t\in [n/4,n/2)$, the initial subwords of length $t$ of the words in $W_n$ all represent distinct elements of $F_2$. 
  \end{enumerate}

 \end{lem}
 
 \begin{proof}
  We can take the initial words of length $n/2$ of the elements of $\mathcal W_n$ to be words in $\{a,b\}$ only, and property \eqref{item:initial_words} will hold as long as the sub-subwords of length $n/4$ are distinct. We can then ``complete'' the various words just by appending the inverse of the chosen initial words.
 \end{proof}
 Fix $\alpha>1$ and $n_0$ as in Lemma \ref{lem:words}, as well as the corresponding sets of words.
  Consider a Turing machine with $K\geq 2$ states, and suppose that it runs in $\leq \epsilon n^2$ time when the input word has length $n$, where we fix some $0<\epsilon<\log(\alpha)/(4\log(K))$. We show that the Turing machine accepts at least one non-trivial word.
  
  Given any word $w\in \mathcal W_n$ for an even $n\geq n_0$, there exists some index $c=c(w)\in [n/4,n/2)$ such that the head of the Turing machine lies at $c$ at most $4\epsilon n$ times. We call such $c$ a checkpoint index. Moreover, either for a set $\mathcal W'_n\subseteq \mathcal W_n$ containing at least half of the words in $\mathcal W_n$ the final position of the head is larger than $c(w)$ on input $w$, or it is at most $c(w)$. We assume the former for infinitely many $n$, the argument for the latter being similar (noting that item \eqref{item:initial_words} of the lemma automatically holds for final subwords as well since all words in $\mathcal W_n$ are trivial). We only consider such $n$ below, and also we assume $n\geq n_0$.
  
It is not hard to see that whether a word $w$ in $\mathcal W'_n$ is accepted by the Turing machine only then depends on:

\begin{itemize}
 \item the checkpoint index,
 \item the sequence of states at the times when it moves from $c$ to $c+1$ or vice versa (which we call checkpoint sequence),
  \item the final subword of $w$ starting after the checkpoint index.
\end{itemize}

Indeed, the operations performed by the Turing machine in each of the visits of its head beyond the checkpoint (that is, while the head is at indices larger than $c$) only depend on the state at the time it moves from $c$ to $c+1$. The reason we included information on the states when the head moves back will be clear later.

Note that there are $\leq n/4$ possible checkpoint indices. Since the checkpoint sequences have length at most $4\epsilon n$ and the Turing machine has $K$ states, there are at most
$$\frac{n}{4} 2 K^{4\epsilon n}= n K^{4\epsilon n}/2$$
possible pairs of checkpoint index and checkpoint sequence. By our choice of $\epsilon$, for any sufficiently large $n$ we have
$$n K^{4\epsilon n}/2<\alpha^n/2\leq |\mathcal W'_n|.$$
For such $n$, we must have two words $w_1,w_2$ in $\mathcal W'_n$ with the same checkpoint index $c$ and the same checkpoint sequence. We can then consider the ``hybrid'' word $w'$ that has the same initial subword of length $c$ as $w_1$ and the same final subword as $w_2$ after $c$. We have that $w'$ cannot represent the identity of $F_2$ in view of the fact that $w_2$ does, and the initial subword of length $c$ in $w'$ represents a different group element. However, $w_2$ is a accepted by the Turing machine if and only if $w'$ is, since the operations performed by the Turing machine within the checkpoint only depend on the checkpoint sequence. Therefore, the Turing machine accepts at least one non-trivial word, as required.
 \end{proof}

 \bibliographystyle{alpha}
 \bibliography{biblio}

\end{document}